\title{On a property of superposition of the generating functions $\ln\left(\frac{1}{1-F(x)}\right)$}
\author{Dmitry Kruchinin\\
\small Tomsk State University of Control Systems and Radioelectronics, Russian Federation\\
\small \texttt{kruchininDm@gmail.com}\\
}
\begin{document}
\maketitle

\begin{abstract}
Obtained a new property of superposition of the generating functions $ \ln\left(\frac {1} {1-F(x)}\right)$, where $ F (x) $ -- generating function with integer coefficients, which allows the construction a primality tests.
The theorem which is based on compositions of positive numbers and its corollary are proved. Examples are given.

Key words: Generating functions, superposition of generating functions, composition of a natural number.
\end{abstract}

\theoremstyle{plain}
\newtheorem{theorem}{Theorem}
\newtheorem{corollary}[theorem]{Corollary}
\newtheorem{lemma}[theorem]{Lemma}
\newtheorem{proposition}[theorem]{Proposition}

\theoremstyle{definition}
\newtheorem{definition}[theorem]{Definition}
\newtheorem{example}[theorem]{Example}
\newtheorem{conjecture}[theorem]{Conjecture}
\theoremstyle{remark}
\newtheorem{remark}[theorem]{Remark}

\newtheorem{Theorem}{Theorem}[section]
\newtheorem{Proposition}[Theorem]{Proposition}
\newtheorem{Corollary}[Theorem]{Corollary}

\theoremstyle{definition}
\newtheorem{Example}[Theorem]{Example}
\newtheorem{Remark}[Theorem]{Remark}
\newtheorem{Problem}[Theorem]{Problem}
\newtheorem{state}[Theorem]{Statement}
\makeatletter
\def\rdots{\mathinner{\mkern1mu\raise\p@
\vbox{\kern7\p@\hbox{.}}\mkern2mu
\raise4\p@\hbox{.}\mkern2mu\raise7\p@\hbox{.}\mkern1mu}}
\makeatother

Generating functions are a powerful tool for solving problems in number theory, combinatorics, algebra and probability theory. One of the directions in the theory of generating functions is the investigation of the coefficients of powers of generating functions, which play an essential role in the operation of superposition of generating functions. In \ cite {bibBook1} shows the possible ways of calculating these coefficients. This paper continues the study of the coefficients of superposition, provided that the external function is the generating function of the logarithm, and the internal generating function with integer coefficients.
   
Consider the following generating functions $F(x)=\sum_{n\geq1}f(n)x^{n}$ and $R(x)=\sum_{n\geq0}r(n)x^{n}$, $ f(n), r (n) $ - integer-valued functions and formulate the following theorem.

\begin{theorem}
\label{1}
Let there be given the generating function $ F(x)=\sum_{n\geq1} f (n) x^{n}$ and $ R (x) = \sum_{n\geq0} r(n)x^{n}$, where $ f (n), r (n) $ - integer-valued functions. Then, for the superposition $$ G \left (x \right) = R \left (F \left (x \right) \right) = \sum_ {n \geq0} g (n) x ^ {n} $$ 
$g (n) $ is an integral function.
\end{theorem}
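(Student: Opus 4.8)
The plan is to expand the superposition as a power series in $F$ and to observe that each coefficient of $x^n$ is a finite integer combination of the given integer values. First I would record that, since $F(x)$ has no constant term (the sum starts at $n\geq 1$, so effectively $f(0)=0$), the substitution $R(F(x))$ is a well-defined formal power series: concretely $R(F(x))=\sum_{k\geq0}r(k)\,F(x)^{k}$, and the lowest power of $x$ occurring in $F(x)^{k}$ is $x^{k}$. Hence for each fixed $n$ only the terms with $k\leq n$ contribute to the coefficient of $x^{n}$, and
$$ g(n)=\sum_{k=0}^{n} r(k)\,[x^{n}]F(x)^{k}. $$

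Next I would analyze $[x^{n}]F(x)^{k}$. Expanding the $k$-th power, $[x^{n}]F(x)^{k}=\sum f(i_{1})f(i_{2})\cdots f(i_{k})$, where the sum runs over all tuples $(i_{1},\dots,i_{k})$ of positive integers with $i_{1}+\cdots+i_{k}=n$, i.e.\ over the compositions of $n$ into $k$ positive parts. This is a finite sum of finite products of the integers $f(j)$, hence an integer; in the notation for coefficients of powers of generating functions from \cite{bibBook1}, this quantity is exactly the composita of $F$, which is integer-valued whenever $f$ is. (For $k=0$ one has $[x^{n}]F(x)^{0}$ equal to $1$ if $n=0$ and $0$ otherwise, consistent with $g(0)=r(0)$.)

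Finally, substituting back, $g(n)$ is a finite $\mathbb{Z}$-linear combination, with coefficients $r(k)\in\mathbb{Z}$, of the integers $[x^{n}]F(x)^{k}$, and therefore $g(n)\in\mathbb{Z}$ for every $n\geq0$, which is the assertion of the theorem.

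The only genuine subtlety, and the step I would be most careful to state cleanly, is the well-definedness and finiteness underlying the displayed identity: everything hinges on $F$ having zero constant term, which is what guarantees that $\sum_{k}r(k)F(x)^{k}$ truncates to a finite sum modulo $x^{n+1}$ so that the composition makes sense as a formal power series. If one preferred to avoid the explicit power-sum expansion, an alternative is induction on $n$ via the recurrence obtained from $G(x)=R(F(x))$ (e.g.\ by comparing coefficients in $G'(x)=R'(F(x))F'(x)$), but the power-series argument above is the most transparent and exhibits $g(n)$ as an integer directly.
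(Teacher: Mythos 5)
Your proposal is correct and follows essentially the same route as the paper: both express $g(n)=\sum_{k}r(k)\,[x^{n}]F(x)^{k}$, identify $[x^{n}]F(x)^{k}$ as the sum over compositions of $n$ into $k$ positive parts of products $f(\lambda_{1})\cdots f(\lambda_{k})$ (the composita $F^{\Delta}(n,k)$), and conclude integrality termwise. The only difference is that you derive this coefficient formula and justify its finiteness via $f(0)=0$, whereas the paper cites it directly from \cite{bibBook1}; your added care about well-definedness is a welcome but inessential refinement.
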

\begin{proof}	
Consider the formula for calculation $g(n)$ of  the superposition of the generating functions
\cite{bibBook1}
	$$g(0)=r(0)$$
		\begin {equation}
 		\label {comp1}
 		g(n)=\sum^{n}_{k=1}\sum_{{\lambda_i>0 \atop 		\lambda_1+\lambda_2+\ldots+\lambda_k=n}}f(\lambda_{1})f(\lambda_{2})\ldots f(\lambda_{k})r(k)=\sum^{n}_{k=1}F^{\Delta}(n,k)r(k),
 		\end {equation}
where $F^{\Delta}(n,k)=\sum_{{\lambda_i>0 \atop 		\lambda_1+\lambda_2+\ldots+\lambda_k=n}}f(\lambda_{1})f(\lambda_{2})\ldots f(\lambda_{k})$ is the compositae of generating function $F(x)=\sum_{n\geq1}f(n)x^{n}$ \cite{bibBook1}.

Because $f(n)$ is an integral function, the values of compositae $F^{\Delta}(n,k)$ are integers.Consequently, the coefficients of the superposition $G\left(x\right)=R\left(F\left(x\right)\right)$ are also integers.
\end{proof}	

For further arguments prove the following theorem.
	
\begin{theorem} 
	The sum
		\begin{equation}
		\label{sum_1}
			\sum_{k=1}^n\frac{n}{k}\sum_{{\lambda_i>0 \atop 		\lambda_1+\lambda_2+\ldots+\lambda_k=n}} a_{\lambda_1}a_{\lambda_2}\ldots a_{\lambda_k} 
		\end{equation} 
		is integer for any integer sequence $a_1, a_2, \ldots, a_n$.
\end{theorem}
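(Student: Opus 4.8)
The plan is to read the inner sum of \eqref{sum_1} as the compositae of a polynomial and then to identify the whole expression with a single coefficient of an explicit power series that manifestly has integer coefficients. Since the parts $\lambda_i$ are positive integers summing to $n$, only $a_1,\dots,a_n$ can occur, so we may put $F(x)=\sum_{i=1}^{n}a_i x^{i}\in\mathbb{Z}[x]$. As recalled in the proof of Theorem~\ref{1}, the compositae $F^{\Delta}(m,k)$ --- the inner sum in \eqref{sum_1} with $n$ replaced by $m$ --- is exactly the coefficient of $x^{m}$ in $F(x)^{k}$.

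First I would set $b(m)=\sum_{k=1}^{m}\tfrac1k F^{\Delta}(m,k)$ for $m\ge 1$, so that \eqref{sum_1} equals $n\,b(n)$. Substituting $y=F(x)$ into the formal identity $\sum_{k\ge1}\tfrac{y^{k}}{k}=-\ln(1-y)$ --- legitimate in $\mathbb{Z}[[x]]$ because $F(x)$ has zero constant term --- gives
$$\sum_{m\ge1}b(m)\,x^{m}=\ln\!\left(\frac{1}{1-F(x)}\right),$$
i.e. the $b(m)$ are precisely the coefficients of the superposition studied in this paper.

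Next I would differentiate this identity formally. Because $\dfrac{1}{1-F(x)}=\sum_{k\ge0}F(x)^{k}$ and $F'(x)$ both have integer coefficients, so does their product
$$\frac{d}{dx}\,\ln\!\left(\frac{1}{1-F(x)}\right)=\frac{F'(x)}{1-F(x)}.$$
On the other hand this derivative equals $\sum_{m\ge1}m\,b(m)\,x^{m-1}$, so every $m\,b(m)$ --- in particular $n\,b(n)$ --- is an integer, which is the claim. Explicitly, $n\,b(n)=\sum_{j=1}^{n}j\,a_{j}\,h(n-j)$ with $h(0)=1$ and $h(i)=\sum_{k=1}^{i}F^{\Delta}(i,k)\in\mathbb{Z}$ for $i\ge1$.

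I do not anticipate a real obstacle: the argument rests entirely on the one observation that $\ln\bigl(1/(1-F(x))\bigr)$ generates the numbers $b(m)$ and that its formal derivative visibly lies in $\mathbb{Z}[[x]]$. A direct induction via the convolution relation for $h(n)$ would also work but involves more bookkeeping; the generating-function route avoids it.
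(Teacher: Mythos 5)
Your proposal is correct and follows essentially the same route as the paper: identify the sum with $n$ times the $n$-th coefficient of $\ln\bigl(1/(1-F(x))\bigr)$, differentiate formally to get $F'(x)/(1-F(x))$, and observe that this is a product of two series with integer coefficients. The only cosmetic difference is that you justify the integrality of $1/(1-F(x))$ via the geometric series $\sum_{k\ge0}F(x)^{k}$ rather than by citing Theorem~\ref{1}, which amounts to the same fact.
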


\begin{proof}
Let us construct a generating function $F(x)=a_1x+a_2x^2+\ldots+a_nx^n+\ldots$. Then 
$$
F^{\Delta}(n,k)=\sum_{{\lambda_i>0 \atop \lambda_1+\lambda_2+\ldots+\lambda_k=n}} a_{\lambda_1}a_{\lambda_2}\ldots a_{\lambda_k}.
$$
is the compositae of this generating function according to its definition.

Hence coefficients of superposition of generating functions
$G(x)=\ln\left(\frac{1}{1-F(x)}\right)$ are given by
$$
g(n)=\sum_{k=1}^n \frac{F^{\Delta}(n,k)}{k},
$$
$$
G(x)=\sum_{n>0} g(n)x^n.
$$
If consider derivative $G'(x)=\left[\ln\left(\frac{1}{1-F(x)}\right)\right]'$
we can obtain the following expression
$$
\left(\frac{F'(x)}{1-F(x)}\right)=g_1+2g_2x^1+\ldots+ng_nx^{n-1}+\ldots
$$
Consider the left part as product of generating functions $F'(x)$ and $\left(\frac{1}{1-F(x)}\right)$. Coefficients of $F'(x)$ are integers. According to Theorem 1, coefficients of superposition of generating functions $H(x)=\left(\frac{1}{1-F(x)}\right)$ are also integers, because $F(x)$ и $R(x)=\left(\frac{1}{1-x}\right)$ are generating functions with integer coefficients.

Product of functions with integer coefficients also have integer coefficients. Hence the expression for the coefficients
\begin{equation}
\label{sum3}
n\sum_{k=1}^n \frac{F^{\Delta}(n,k)}{k}
\end{equation}   
are integers.

The theorem is proved.
\end{proof}

Consider some simple examples.
\begin{Example}
Let $a_n$ be prime integers and  $a_1=1$. Then for  $n=6$ we have:
\begin{multline*}
n\sum_{k=1}^n\frac{1}{k}\sum_{{\lambda_i>0 \atop 		\lambda_1+\lambda_2+\ldots+\lambda_k=n}} a_{\lambda_1}a_{\lambda_2}\ldots a_{\lambda_k}=6\left(a_{6}+\frac{a_{1}a_{5}+a_{2}a_{4}+a_{3}a_{3}+a_{5}a_{1}}{2}+\ldots+\frac{a_{1}a_{1}a_{1}a_{1}a_{1}a_{1}}{6}\right)=\\
=6\left(11+\frac{14+20+9}{2}+\frac{15+36+8}{3}+\frac{12+24}{4}+\frac{10}{5}+\frac{1}{6}\right)=380
\end {multline*}
\end{Example}
\begin{Example}
$a_i=1$, $i=\overline{1,n}$ then
$$
\sum_{{\lambda_i>0 \atop \lambda_1+\lambda_2+\ldots+\lambda_k=n}} a_{\lambda_1}a_{\lambda_2}\ldots a_{\lambda_k} ={n-1 \choose k-1}. 
$$
because it accounts the number of $n$ compositions that have $k$ parts, $n$-- positive number. Hence
$$
\sum_{k=1}^n \frac{n}{k} {n-1 \choose k-1}=2^{n}-1.
$$
\end{Example}

\begin {corollary}
For any integer sequence $a_1, a_2, \ldots, a_n$, where $n$ is a prime number, sum 
\begin{equation}
\label{sum_2}
\sum_{k=1}^{n-1}\frac{1}{k}\sum_{{\lambda_i>0 \atop \lambda_1+\lambda_2+\ldots+\lambda_k=n}} a_{\lambda_1}a_{\lambda_2}\ldots a_{\lambda_k} 
\end{equation} 
is integer.
\end {corollary}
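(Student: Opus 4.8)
The plan is to deduce this from Theorem~2 by isolating the single term with $k=n$ and then invoking a divisibility argument available only when $n$ is prime. Throughout I write, as in the proof of Theorem~2, $F^{\Delta}(n,k)=\sum_{\lambda_i>0,\ \lambda_1+\cdots+\lambda_k=n}a_{\lambda_1}\cdots a_{\lambda_k}$ for the compositae of $F(x)=\sum_{m\geq1}a_m x^m$.

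First I would observe that the only composition of $n$ into $n$ positive parts is $(1,1,\ldots,1)$, so $F^{\Delta}(n,n)=a_1^{n}$. Hence, setting $S=\sum_{k=1}^{n-1}F^{\Delta}(n,k)/k$, we have
$$
n\sum_{k=1}^{n}\frac{F^{\Delta}(n,k)}{k}=nS+a_1^{n}.
$$
The left-hand side is an integer by Theorem~2, and $a_1^{n}\in\mathbb{Z}$, so $nS\in\mathbb{Z}$.

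Next I would bound the denominator of $S$ itself. Each summand $F^{\Delta}(n,k)/k$ has integer numerator and, in lowest terms, denominator dividing $k\le n-1$, so $LS\in\mathbb{Z}$ where $L=\operatorname{lcm}(1,2,\ldots,n-1)$. Writing $S=p/q$ in lowest terms gives $q\mid L$; since $n$ is prime, none of $1,\ldots,n-1$ is a multiple of $n$, hence $n\nmid L$ and therefore $\gcd(q,n)=1$. Combining this with $q\mid n$ (which follows from $nS=np/q\in\mathbb{Z}$ and $\gcd(p,q)=1$) forces $q=1$, i.e.\ $S\in\mathbb{Z}$, as claimed.

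The only genuine point is the step $n\nmid\operatorname{lcm}(1,\ldots,n-1)$, which is exactly where primality enters and what upgrades ``$nS$ is an integer'' to ``$S$ is an integer''; for composite $n$ the statement can fail, e.g.\ for $n=4$ with $a_i=1$ one gets $S=7/2$.
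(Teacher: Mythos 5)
Your proposal is correct and follows essentially the same route as the paper's own proof: isolate the $k=n$ term $F^{\Delta}(n,n)=a_1^{n}$, conclude from Theorem~2 that $n\sum_{k=1}^{n-1}F^{\Delta}(n,k)/k$ is an integer, and then use the primality of $n$ (so that $n$ is coprime to every denominator $k\le n-1$) to cancel the factor $n$. Your write-up via $q\mid\operatorname{lcm}(1,\ldots,n-1)$ and $q\mid n$ just makes rigorous the paper's terser remark that ``$n$ has no common divisors with $k<n$.''
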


\begin{proof}
Consider the expression (\ref{sum3}).
When $k=n$, the expression (\ref{sum3}) is equal to the integer value $a^{n}_{1}$ \cite{bibBook1}. Hence the expression
$$
n\sum_{k=1}^{n-1} \frac{F^{\Delta}(n,k)}{k}
$$
 is an integer.
$n$ have not common divisors of $k<n$, because $n$ is the prime.
Hence the expression (\ref{sum_2}) is an integer.
\end{proof}

Hence the value of expression 
\begin{equation}
\label{sum_3}
\sum_{k=1}^{n-1}\frac{F^{\Delta}(n,k)}{k}
\end{equation}
is integer for any $n$, that are prime numbers. The converse is false, i.e. if $n$ is not prime, then proper value (\ref{sum_3}) may be either integer or not.                                            

Consider some specific examples.

\begin{Example}
Refer to the example above.
$$
\sum_{k=1}^n \frac{n}{k} {n-1 \choose k-1}=2^{n}-1.
$$

Hence the value of expression 
$$
\sum_{k=1}^{n-1} \frac{1}{k} {n-1 \choose k-1}=\frac{2^{n}-2}{n}
$$
is integer for prime $n$.
\end{Example}

\begin{Example}
Let us have a generating function  $F(x)=x+x^2$ and its compositae,according to the source \cite{bibBook1},$F^{\Delta}(n,k)={k \choose n-k}$, then coefficients of superposition $\ln\left(\frac{1}{1-x-x^2}\right)$ are given by
$$
g_n=\sum_{k=1}^n {k \choose n-k}\frac{1}{k},
$$
$$
ng_n=[1,3,4,7,11,18,29,47,76,123,199,322,521,843,1364,2207,3571].
$$
This formula generates Lukas numbers (А000032)\cite{bibUrl}. Hence the value of expression 
$$
\frac{L_n-1}{n}
$$
is integer for prime numbers, where $L_n$ - Lukas numbers.
$$
L(n)=\left(\frac{1+\sqrt{5}}{2}\right)^n + \left(\frac{1-\sqrt{5}}{2}\right)^n
$$
or
$$
L(n)=Fib(n)+2Fib(n-1)=Fib(n + 1)+Fib(n-1).
$$
\end{Example}

\begin{Example}
Let us have the generating function for Catalan numbers  $F(x)=\frac{1-\sqrt{1-4x}}{2x}$ and its compositae, due to the source \cite{bibBook1}, 
$F^{\Delta}(n,k)=\frac{k}{n}{2n-k-1\choose n-1}$, then coefficients of superposition $\ln\left(\frac{1}{1-F(x))}\right)$ are given by
$$
g_n=\sum_{k=1}^n \frac{k}{n}{2n-k-1\choose n-1}\frac{1}{k}=\frac{1}{n}\sum_{k=1}^n{2n-k-1\choose n-1},
$$
$$
ng_n=[1,3,10,35,126,462,1716,6435,24310,92378].
$$
This formula induces sequence of integers А001700\cite{bibUrl}, wherefrom
$$
ng_n={2n-1 \choose n-1}.
$$
Hence value of expression
$$
\frac{1}{n}\left({2n-1 \choose n-1}-1\right)
$$
is integer for prime numbers.
\end{Example}

This result allows us to construct algorithms which are based on superposition of the generating functions $\ln\left(\frac{1}{1-F(x)}\right)$, where $F(x)$ is generating function with integer coefficients,  for verification of the positive numbers' primality.

\end{document}